\documentclass[11pt]{amsart}

\usepackage[T1]{fontenc}
\usepackage[utf8]{inputenc}
\usepackage{geometry}
\usepackage{amssymb}
\usepackage{amsmath}
\usepackage{mathtools}
\usepackage{xcolor}
\usepackage[colorlinks=true,citecolor=red,linkcolor=blue,urlcolor=red]{hyperref}

\DeclareMathOperator{\tr}{tr}
\DeclareMathOperator{\lct}{lct}
\DeclareMathOperator{\glct}{glct}
\DeclareMathOperator{\End}{End}
\DeclareMathOperator{\Aut}{Aut}
\DeclareMathOperator{\Gr}{Gr}

\newcommand{\HH}{\mathcal{H}}

\newtheorem{thm}{Theorem}[section]
\newtheorem{lem}[thm]{Lemma}
\newtheorem{prop}[thm]{Proposition}

\theoremstyle{definition}

\newtheorem{ques}[thm]{Question}
\newtheorem{rem}[thm]{Remark}

\numberwithin{equation}{section}

\begin{document}

\title[An equivariant fixed-level Demailly identity]{An equivariant fixed-level Demailly identity for Fano manifolds}

\author{Jihao Liu}
\address{Department of Mathematics, Peking University, No. 5 Yiheyuan Road, Haidian District, Beijing 100871, China}
\address{Beijing International Center for Mathematical Research, Peking University, No. 5 Yiheyuan Road, Haidian District, Beijing 100871, China}
\email{liujihao@math.pku.edu.cn}

\author{Sheng Qin}
\address{Department of Mathematics, Peking University, No. 5 Yiheyuan Road, Haidian District, Beijing 100871, China}
\email{qinsheng@stu.pku.edu.cn}

\subjclass[2020]{32Q20, 14J45, 32U05, 14L24}
\keywords{Tian's alpha invariant, global log canonical threshold, Bergman potential, Fano manifold, complex singularity exponent, equivariant}
\date{\today}

\begin{abstract}
For a Fano manifold $X$, a compact group $G\subset\Aut(X)$, and a fixed level $k$ with $-kK_X$ basepoint-free, Jin and Rubinstein asked whether the fixed-level equivariant Tian's alpha invariant $\alpha_{k,G}$ equals the fixed-level equivariant global log canonical threshold $\glct_{k,G}$, and proved this equality for toric $X$. In this paper we provide a positive answer to Jin and Rubinstein's question in full generality. The main result of this paper was obtained by Chatgpt 5.5 pro, and the Danus system based on the Rethlas system.
\end{abstract}

\maketitle

\section{Introduction}\label{sec:introduction}

We work over the field of complex numbers $\mathbb C$.

Let $X$ be a Fano manifold of dimension $n$, let $L=-K_X$ be its anticanonical line bundle, and let $G\subset\Aut(X)$ be a compact subgroup. Fix a positive integer $k$ such that the level-$k$ anticanonical bundle $-kK_X$ is basepoint-free, and write $E=H^0(X,-kK_X)$. Choosing a $G$-invariant Hermitian metric $h$ on $-kK_X$, a $G$-invariant Hermitian inner product on $E$, and a smooth positive volume measure $\mu$ on $X$, Jin and Rubinstein \cite[Definitions~4.1 and~4.4]{JR25} attach to this data two thresholds.

The analytic threshold is the \emph{fixed-level equivariant Tian's alpha invariant}. Let
\begin{equation}\label{eq:Hspace}
\HH_{k,G}=\left\{\varphi=\frac1k\log\sum_{i=1}^{N}|s_i|_h^2:\ \{s_i\}\text{ is a basis of }E,\ \varphi\text{ is }G\text{-invariant}\right\},
\end{equation}
where $N=\dim E$, and set
\begin{equation}\label{eq:alpha}
\alpha_{k,G}=\sup\left\{c>0:\ \sup_{\varphi\in\HH_{k,G}}\int_X e^{-c(\varphi-\sup_X\varphi)}\,d\mu<\infty\right\}.
\end{equation}
The algebraic threshold is the \emph{fixed-level equivariant global log canonical threshold}. For a nonzero $G$-invariant linear subspace $W\subset E$ with basis $w_1,\dots,w_p$, set
\begin{equation}\label{eq:lctW}
\lct|W|=\sup\left\{a>0:\ \Bigl(\textstyle\sum_{j=1}^{p}|w_j|_h^2\Bigr)^{-a}\text{ is locally integrable on }X\right\},
\end{equation}
which is independent of the chosen basis, and put
\begin{equation}\label{eq:glct}
\glct_{k,G}=k\inf_{\substack{0\neq W\subset E\\gW=W\ \forall g\in G}}\lct|W|.
\end{equation}
These definitions appear, in this normalization, as \cite[Definition~4.1, Lemma~4.3, and Definition~4.4]{JR25}. The fixed-level quantities differ from the limit identity $\alpha_G=\inf_k\glct_{k,G}$.

The relationship between Tian's alpha invariants and algebraic global log
canonical thresholds is by now classical; see
\cite{Tia87,Tia90,DK01,CS08,Shi10,JR25} and the references therein for related results and background.  The fixed-level equivariant case, however, was left open by the existing comparison results: Jin and Rubinstein introduced $\glct_{k,G}$ as the algebraic counterpart of $\alpha_{k,G}$ and
asked whether $\glct_{k,G}=\alpha_{k,G}$ \cite[Problem~1.2]{JR25}.

\begin{ques}[{\cite[Problem~1.2]{JR25}}]\label{ques:jr}
Let $X$ be Fano with $L=-K_X$, and let $G\subset\Aut(X)$ be a compact subgroup. Is $\glct_{k,G}=\alpha_{k,G}$?
\end{ques}

The non-equivariant comparison of Tian's alpha invariant with the global log canonical threshold, and the equivariant comparison in the limit $k\to\infty$, are classical; the fixed-level equivariant comparison at a single level $k$ was open, and is raised as Problem~1.2 in \cite[Problem~1.2]{JR25}. Jin and Rubinstein establish Question~\ref{ques:jr} for toric Fano manifolds under suitable compact group actions, where the toric structure makes the invariant section spaces explicit, and remark that the general non-toric case requires a corresponding structure result for the spaces of invariant sections. The purpose of this note is to prove the equality for an arbitrary compact complex manifold and an arbitrary basepoint-free line bundle with a compact group action.

We now record the same definitions for a general basepoint-free line bundle.  Let $X$ be a compact complex manifold, let $L$ be a basepoint-free holomorphic line bundle on $X$, and let $G$ be a compact group acting holomorphically on $X$ and linearly on $L$.  Fix a smooth $G$-invariant Hermitian metric $h$ on $L$, a smooth positive volume measure $\mu$ on $X$, and set $E=H^0(X,L)$, $N=\dim E$.  Define
\[
\HH_{k,G}(X,L)
=
\left\{
\varphi=\frac1k\log\sum_{i=1}^{N}|s_i|_h^2:
\{s_i\}\text{ is a basis of }H^0(X,L),\ 
\varphi\text{ is }G\text{-invariant}
\right\}.
\]
Then put
\[
\alpha_{k,G}(X,L)
=
\sup\left\{
c>0:
\sup_{\varphi\in\HH_{k,G}(X,L)}
\int_X e^{-c(\varphi-\sup_X\varphi)}\,d\mu<\infty
\right\}.
\]
For a nonzero $G$-invariant linear subspace $W\subset H^0(X,L)$ with basis
$w_1,\dots,w_p$, set
\[
\lct|W|
=
\sup\left\{
a>0:
\left(\sum_{j=1}^{p}|w_j|_h^2\right)^{-a}
\text{ is locally integrable on }X
\right\},
\]
and define
\[
\glct_{k,G}(X,L)
=
k\inf_{\substack{0\neq W\subset H^0(X,L)\\ gW=W\ \forall g\in G}}
\lct|W|.
\]
When the line bundle \(L\) is clear from the context, we suppress it from the notation and simply write \(\HH_{k,G}\), \(\alpha_{k,G}\), and \(\glct_{k,G}\).

When $X$ is Fano and $L=-kK_X$, these definitions recover the fixed-level
anticanonical invariants $\alpha_{k,G}$ and $\glct_{k,G}$ above.

\begin{thm}\label{thm:general}
Let $X$ be a compact complex manifold with a smooth positive volume measure $\mu$, let $L$ be a basepoint-free holomorphic line bundle on $X$ with a smooth $G$-invariant Hermitian metric $h$, where $G$ is a compact group acting holomorphically on $X$ and linearly on $L$, and let $k$ be a positive integer. With $\alpha_{k,G}$ and $\glct_{k,G}$ defined as above, one has $\alpha_{k,G}=\glct_{k,G}$.
\end{thm}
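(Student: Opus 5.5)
The plan is to prove the two inequalities separately. The first step is to rewrite the elements of $\HH_{k,G}$ in terms of Hermitian operators. Fix a $G$-invariant inner product $\langle\cdot,\cdot\rangle$ on $E=H^0(X,L)$ (average any inner product over $G$ with Haar measure) and an orthonormal basis $e_1,\dots,e_N$. Every $\varphi\in\HH_{k,G}$ can be written as
\[
\varphi_A=\tfrac1k\log\sum_{i,j}A_{ij}\,\langle e_i,e_j\rangle_h,
\]
where $A$ is a positive definite Hermitian operator on $E$. The $G$-invariance of $\varphi$ forces $A$ to commute with $G$, since a Hermitian form on $E$ is determined by the function $x\mapsto \sum A_{ij}\langle e_i(x),e_j(x)\rangle_h$ on $X$. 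Consequently the eigenspaces of $A$, and the images of limits of such operators, are $G$-invariant subspaces. Replacing $A$ by $A/\|A\|$ changes $\varphi$ only by a constant, which cancels against $\sup_X\varphi$. We may therefore assume $\|A\|=1$. Then $\varphi_A\le C$ uniformly, and $\sup_X\varphi_A\ge -C'$, because the top eigenvector $v$ satisfies $\sup_X|v|_h^2\ge c_0>0$ for all unit vectors $v$ by compactness of the unit sphere.

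For the inequality $\alpha_{k,G}\le\glct_{k,G}$, take a nonzero $G$-invariant $W$ and its $\langle\cdot,\cdot\rangle$-orthogonal complement $W^\perp$, which is also $G$-invariant. For $\varepsilon\in(0,1]$ set $A_\varepsilon=P_W+\varepsilon P_{W^\perp}$. Then
\[
\varphi_\varepsilon=\tfrac1k\log\Bigl(\textstyle\sum|w_j|_h^2+\varepsilon\sum|u_l|_h^2\Bigr)\in\HH_{k,G},
\]
with $\sup_X\varphi_\varepsilon$ bounded independently of $\varepsilon$. As $\varepsilon\downarrow0$, monotone convergence gives
\[
\int_X e^{-c\varphi_\varepsilon}\,d\mu\ \to\ \int_X\Bigl(\textstyle\sum|w_j|_h^2\Bigr)^{-c/k}d\mu,
\]
and the right-hand side is infinite once $c>k\lct|W|$. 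Hence $\alpha_{k,G}\le k\lct|W|$ for every invariant $W$, which is the first inequality.

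For $\alpha_{k,G}\ge\glct_{k,G}$, fix $c<\glct_{k,G}$ and argue by contradiction. Suppose there are normalized invariant operators $A_m$ with $\int_X e^{-c(\varphi_{A_m}-\sup\varphi_{A_m})}d\mu\to\infty$. By the uniform bound on $\sup\varphi_{A_m}$, this means $\int_X e^{-c\varphi_{A_m}}d\mu\to\infty$. By compactness, after passing to a subsequence, $A_m\to A_\infty$, where $A_\infty$ is positive semidefinite, $G$-equivariant, and of norm $1$. Its image $W=\operatorname{im}A_\infty$ is a nonzero $G$-invariant subspace. Diagonalizing $A_\infty$ on $W$ shows that $\varphi_{A_\infty}$ differs by a bounded amount from $\frac1k\log\sum|w_j|_h^2$. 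Since $c<\glct_{k,G}\le k\lct|W|$, the function $e^{-c\varphi_{A_\infty}}$ is integrable; choose $c'$ with $c<c'<k\lct|W|$, so that $e^{-c'\varphi_{A_\infty}}$ is integrable as well. The functions $\varphi_{A_m}$ are (locally, after trivializing $L$) quasi-psh of the form $\frac1k\log\sum|f|^2$ with holomorphic coefficients converging uniformly. Hence $\varphi_{A_m}\to\varphi_{A_\infty}$ in $L^1$.

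The Demailly--Koll\'ar semicontinuity theorem (effective version: if $e^{-c'\psi}$ is integrable near a point and $\psi_m\to\psi$ in $L^1_{loc}$, then $e^{-c\psi_m}\to e^{-c\psi}$ in $L^1$ on a smaller neighbourhood for $c<c'$) applies on each chart of a finite cover of $X$. It yields $\sup_m\int_X e^{-c\varphi_{A_m}}d\mu<\infty$, a contradiction, so $\alpha_{k,G}\ge c$ for every $c<\glct_{k,G}$.

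The main obstacle is this last step. The limit $A_\infty$ is degenerate, so one must check that the Demailly--Koll\'ar theorem applies in this situation. Concretely, the hypotheses of $L^1_{loc}$ convergence and local psh-ness with uniform upper bound must survive the degeneration. If invoking it directly is problematic, the fallback is to apply the inequality $A_m\ge (1-\delta)P_{\mathrm{top}}$ on suitable invariant eigen-blocks. This gives a pointwise lower bound of the form $\sum_{m}\ge \tfrac12\sum_{\text{im}A_\infty}|\cdot|^2 - o(1)\cdot(\text{bounded})$, and one can combine it with the openness of integrability on a compact parameter space of coefficient matrices.
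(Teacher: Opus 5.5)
Your argument is correct. The upper bound is the paper's: the family $A_\varepsilon=P_W+\varepsilon P_{W^\perp}$ plus monotone convergence. The lower bound, however, follows a genuinely different route.

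\emph{Your route.} You run a single compactness argument on the set of $\rho(G)$-commuting positive semidefinite operators of operator norm $1$. A bad sequence $A_m$ subconverges to some $A_\infty$. With $W=\mathrm{im}\,A_\infty$, which is invariant, and $\lambda^+$ the smallest positive eigenvalue of $A_\infty$, the bound $\lambda^+P_W\le A_\infty\le P_W$ shows that the local potential of $A_\infty$ has complex singularity exponent strictly greater than $c/k$. Demailly--Koll\'ar applied at $A_\infty$ then contradicts the blow-up.

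\emph{The obstacle you flag is not one.} The degeneracy of $A_\infty$ does not matter: Theorem~0.2(2) of Demailly--Koll\'ar holds for any locally $L^1$ plurisubharmonic limit. The local potential of $A_\infty$ is one, since it is $\frac12\log$ of a nonzero sum of squared moduli of holomorphic functions. So you can drop the fallback paragraph. As phrased it would not work anyway, since a lower bound of the form $\frac12F_W-o(1)$ says nothing near the zeros of $F_W$.

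\emph{The paper's route.} The paper splits the lower bound in two steps. First, the top eigenspace $W_{\mathrm{top}}$ of a normalized commuting $A$ is automatically invariant, and $A$ dominates a fixed multiple of $P_{W_{\mathrm{top}}}$. This reduces everything to a uniform integrability bound over the compact family $S$ of invariant subspaces. Second, that bound is proved by the same Demailly--Koll\'ar-plus-compactness argument, run on Grassmannians.

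\emph{Comparison.} Your version is shorter and avoids the Grassmannian bookkeeping. The paper's version yields an explicit estimate
\[
\int_X e^{-c(\varphi-\sup_X\varphi)}\,d\mu\le(NC_0)^{c/k}\sup_{W\in S}\int_XF_W^{-c/k}\,d\mu,
\]
which makes transparent why only invariant subspaces matter. Your normalization $\|A\|=1$ already gives $P_{W_{\mathrm{top}}}\le A\le I$, so the paper's reduction was within reach.

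\emph{One small point.} You claim that $G$-invariance of $F_A$ forces $A$ to commute with $\rho(G)$. This is true, but it needs an argument: injectivity of $A\mapsto F_A$, via polarization and the identity theorem. That argument uses connectedness of $X$, and the claim fails for disconnected $X$. The paper sidesteps this with Haar averaging, which produces a commuting $\overline A$ with $F_{\overline A}=F_A$ and requires no injectivity.
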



The Fano case gives the following corollary.

\begin{thm}\label{thm:main}
Let $X$ be a Fano manifold and $G\subset\Aut(X)$ a compact subgroup. Fix a positive integer $k$ such that $-kK_X$ is basepoint-free. Then, with the thresholds defined in \eqref{eq:alpha} and \eqref{eq:glct},
\begin{equation}\label{eq:equality}
\alpha_{k,G}=\glct_{k,G}.
\end{equation}
In particular, Question~\ref{ques:jr} has an affirmative answer for every Fano manifold $X$ and every compact subgroup $G\subset\Aut(X)$.
\end{thm}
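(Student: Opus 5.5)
Theorem~\ref{thm:main} is the special case $L=-kK_X$ of Theorem~\ref{thm:general}. In that case the sections of $L$ are exactly $E=H^0(X,-kK_X)$, and the normalizing factor $\frac1k$ in $\HH_{k,G}(X,L)$ agrees with the one in \eqref{eq:Hspace}. The same factor $k$ appears in \eqref{eq:glct}. So both thresholds coincide literally, and I would concentrate on proving Theorem~\ref{thm:general}.

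\textbf{Step 1: reduction to invariant forms.} Every $\varphi\in\HH_{k,G}$ has the form $k\varphi(x)=\log H(\mathrm{ev}_x,\mathrm{ev}_x)$, where $H$ is a positive definite Hermitian form on $E^*$ and $\mathrm{ev}_x$ denotes evaluation at $x$, measured with $h$. Since $\varphi$ is $G$-invariant, replacing $H$ by its Haar average $\int_G g^*H\,dg$ does not change $\varphi$. Hence $\HH_{k,G}$ is parametrized by $G$-invariant positive definite forms $H$.

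Fix the $G$-invariant inner product on $E$. I would diagonalize $H$ against it, $H=\sum_j\lambda_jP_{W_j}$, and note that each eigenspace $W_j$ is $G$-invariant. Replacing $H$ by $H/\lambda_{\max}$ shifts $\varphi$ by a constant, which leaves $\varphi-\sup_X\varphi$ unchanged. I would therefore normalize $\lambda_{\max}=1$. Once $\lambda_{\max}=1$, $\sup_X\varphi$ is bounded above and below uniformly in $H$: the upper bound comes from the maximal eigenvalue, and the lower bound from the fact that $L$ is basepoint-free, so no eigenspace vanishes identically.

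\textbf{Step 2: $\alpha_{k,G}\le\glct_{k,G}$.} Let $W$ be a nonzero invariant subspace and set $H_\varepsilon=P_W+\varepsilon P_{W^\perp}$. The functions $k\varphi_\varepsilon$ decrease to $\log\sum|w_j|_h^2$ as $\varepsilon\to0$, while $\sup_X\varphi_\varepsilon$ stays bounded. By monotone convergence, $\int e^{-c(\varphi_\varepsilon-\sup_X\varphi_\varepsilon)}\,d\mu\to\infty$ whenever $c/k>\lct|W|$. This gives $\alpha_{k,G}\le k\,\lct|W|$, and taking the infimum over $W$ gives the inequality.

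\textbf{Step 3: $\alpha_{k,G}\ge\glct_{k,G}$, the main obstacle.} Fix $c<\glct_{k,G}$. Consider the set $K$ of $G$-invariant positive semidefinite forms $H$ with $\lambda_{\max}=1$; it is compact. A degenerate $H\in K$ has invariant image $W$, and $H(\mathrm{ev}_x,\mathrm{ev}_x)$ is then comparable to $\sum|w_j|_h^2$. So for every $H\in K$ we get $\int_X H(\mathrm{ev}_x,\mathrm{ev}_x)^{-c'/k}\,d\mu<\infty$ for some $c'\in(c,\glct_{k,G})$.

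Next I would apply the effective semicontinuity theorem of Demailly--Koll\'ar to the psh-type family $\frac1k\log H(\mathrm{ev}_x,\mathrm{ev}_x)$, working locally in trivializations. It says that integrability at exponent $c'$ for a limit function gives uniform integrability at any exponent $c<c'$ for all functions $L^1$-close to it. Since $H\mapsto\log H(\mathrm{ev}_x,\mathrm{ev}_x)$ is $L^1$-continuous on $K$, a finite subcover of $K$ yields a uniform bound on the integrals. Combined with the uniform control of $\sup_X\varphi$ from Step 1, this gives $\alpha_{k,G}\ge c$.

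The delicate points in this step are checking the hypotheses of Demailly--Koll\'ar near degenerate forms, where the functions take the value $-\infty$ along the base locus of $W$, and keeping all constants uniform in $H$.
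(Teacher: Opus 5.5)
Your proposal is correct. The reduction to Theorem~\ref{thm:general}, the Haar averaging, and the degeneration $P_W+\varepsilon P_{W^\perp}$ for the upper bound are exactly the paper's, but your lower bound takes a different route.

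The paper never applies Demailly--Koll\'ar to operators. After normalizing $\tr A=1$, it uses that the top eigenspace $W_{\mathrm{top}}$ of a commuting operator is $G$-invariant, with $\frac1N P_{W_{\mathrm{top}}}\le A\le I_E$. This gives the pointwise bound $e^{-c(\varphi-\sup_X\varphi)}\le (N\max_X F_E)^{c/k}F_{W_{\mathrm{top}}}^{-c/k}$. It therefore reduces everything to a uniform bound over the compact set $S$ of invariant subspaces, where the Demailly--Koll\'ar hypothesis at $W_0$ is simply $c/k<\lct(W_0)$.

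You instead run semicontinuity plus compactness over the larger compact set $K$ of normalized invariant semidefinite forms. You treat a degenerate $H_0$ through the comparison $\lambda^{+}_{\min}P_{\mathrm{im}H_0}\le H_0\le P_{\mathrm{im}H_0}$ with its invariant image. This is valid. Locally $F_H=|\tau|_h^2\sum_i|g_i(H,z)|^2$, where the holomorphic $g_i(H,\cdot)$ come from applying a square root of $H$ to the coefficient vector of an orthonormal basis, so they depend continuously on $H$. That gives the required $L^1_{\mathrm{loc}}$ continuity of the local psh functions. The constant $\lambda^{+}_{\min}$ only has to be positive at the single point $H_0$. The $-\infty$ locus along the base locus of $\mathrm{im}H_0$ is no obstacle, since Demailly--Koll\'ar only needs a locally $L^1$ psh limit.

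Your route costs these extra verifications. The paper's domination isolates the structural reason subspaces suffice and yields an explicit constant. In fact your own normalization $\lambda_{\max}=1$ already gives $P_{W_{\mathrm{top}}}\le H\le I_E$, even without the paper's $1/N$ loss. So you could have applied Demailly--Koll\'ar on $S$ alone.

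A minor point on Step~1: the uniform lower bound on $\sup_X\varphi$ has nothing to do with basepoint-freeness. It follows from $\min_{\|w\|_E=1}\max_X|w|_h^2>0$. It is also not needed: in Step~2 one trivially has $\max_X F_{H_\varepsilon}\ge\max_X F_W>0$, and Step~3 only uses $\max_X F_H\le\max_X F_E$.
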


The proof uses only the $G$-linearization of $-kK_X$ and the basepoint-freeness of the linear system. It makes no use of a toric structure, of monomial bases, of finite group orbits, or of the classification of Fano manifolds.

The structure of the argument is as follows. We first parametrize the invariant Bergman potentials in \eqref{eq:Hspace} by positive Hermitian frame operators, and observe that, after Haar averaging, an invariant Bergman potential is represented by a frame operator commuting with the unitary representation on $E$ (Section~\ref{sec:averaging}). The upper bound $\alpha_{k,G}\le\glct_{k,G}$ is then a degeneration: approaching the orthogonal projection onto a worst invariant subspace through positive definite operators produces invariant potentials whose integrability collapses exactly like the subspace linear system (Section~\ref{sec:upper}). The lower bound $\alpha_{k,G}\ge\glct_{k,G}$ is the crux. A trace-normalized commuting frame operator has a top eigenspace which is automatically $G$-invariant and carries at least a $1/N$ fraction of the pointwise squared norm; the desired uniform integrability over \emph{all} invariant Bergman potentials is then reduced to a single uniform integrability bound over the \emph{compact} family of invariant subspaces, which we obtain from the effective semicontinuity of complex singularity exponents of Demailly and Koll\'ar \cite[Main Theorem~0.2(2)]{DK01} (Sections~\ref{sec:uniform} and~\ref{sec:lower}).

\begin{rem}\label{rem:rethlas}
The sketch of the proof of the main result of this paper was obtained by Chatgpt 5.5 pro, and later summed up, verified, and properly written by the Danus system, a specialized agent built on Rethlas and substantially more capable for fundamental mathematical research
based on the Rethlas system. Human verification and polishing were done afterwards. See \cite{Ju+26} for a detailed introduction to the Rethlas system. Due to the limitation of automated systems, it is possible that we have missed some related references in the literature, and we welcome any comments from experts.
\end{rem}

\subsection*{Acknowledgements}
The authors were partially supported by the National Key R\&D Program of China \#\allowbreak 2024YFA1014400.
The authors would like to thank the Rethlas team, namely Haocheng Ju, Jiedong Jiang, Shurui Liu, Guoxiong Gao, Yuefeng Wang, Zeming Sun, Bin Wu, Liang Xiao, and Bin Dong, for their contributions to the development of Rethlas and its customized version used for the problem studied in this paper.
The authors would like to thank Ruochuan Liu and Gang Tian for constant support and encouragement.

\section{Frame operators and operator averaging}\label{sec:averaging}

Throughout, $X$ is a compact complex manifold, $\mu$ is a smooth positive volume measure on $X$, and $L$ is a basepoint-free holomorphic line bundle on $X$. We let $G$ be a compact group acting holomorphically on $X$ and linearly on $L$, and we fix a smooth $G$-invariant Hermitian metric $h$ on $L$. We write $E=H^0(X,L)$ and $N=\dim E$, equipped with a $G$-invariant Hermitian inner product $\langle\cdot,\cdot\rangle_E$, and we let
\[
\rho:G\longrightarrow U(E)
\]
be the resulting unitary representation on sections. Specializing to $X$ Fano and $L=-kK_X$ recovers the setting of Section~\ref{sec:introduction}; we work in this generality throughout.

For each $x\in X$ the evaluation map $\mathrm{ev}_x:E\to L_x$ defines a positive semidefinite Hermitian operator $Q_x\in\End(E)$ by
\begin{equation}\label{eq:Qx}
\langle Q_x s,t\rangle_E=h_x\bigl(s(x),t(x)\bigr),\qquad s,t\in E.
\end{equation}
For a positive definite Hermitian operator $A\in\End(E)$ we set
\begin{equation}\label{eq:FA}
F_A(x)=\tr(A\,Q_x),\qquad x\in X.
\end{equation}
For a nonzero linear subspace $W\subset E$ with orthonormal basis $w_1,\dots,w_p$ we set
\begin{equation}\label{eq:FW}
F_W(x)=\sum_{j=1}^{p}|w_j(x)|_h^2=\tr(P_W\,Q_x),
\end{equation}
where $P_W$ is the orthogonal projection of $E$ onto $W$; the second equality is the case $A=P_W$ of \eqref{eq:FA} interpreted with $Q_x$ positive semidefinite, and $F_W$ is independent of the chosen orthonormal basis because two such bases differ by a unitary matrix. Since $L$ is basepoint-free, $F_E$ is a positive smooth function on the compact space $X$.

The next statement identifies the invariant Bergman potentials \eqref{eq:Hspace} with the commuting frame operators. With the notation \eqref{eq:FA}, write
\begin{equation}\label{eq:HFA}
\HH_{k,G}=\left\{\tfrac1k\log F_A:\ A\in\End(E)\text{ positive definite Hermitian and }F_A\text{ is }G\text{-invariant}\right\}.
\end{equation}
That \eqref{eq:HFA} agrees with \eqref{eq:Hspace} is the first assertion of the following lemma, applied with $A=B^*B$ for $\{s_i\}$ the rows of $B$ in a fixed orthonormal basis.

\begin{lem}[Operator averaging]\label{lem:averaging}
With the notation above:
\begin{enumerate}
\item[(1)] If $A$ is positive definite Hermitian and $A=B^*B$, then for the basis $s_i=\sum_j B_{ij}e_j$ of $E$ obtained from an orthonormal basis $e_1,\dots,e_N$ of $E$ one has $F_A=\sum_{i=1}^N|s_i|_h^2$; conversely every basis of $E$ arises this way from a unique positive definite Hermitian $A$. Thus \eqref{eq:Hspace} and \eqref{eq:HFA} define the same set $\HH_{k,G}$.
\item[(2)] For every $g\in G$ and $x\in X$ one has the covariance identity $Q_{gx}=\rho(g)\,Q_x\,\rho(g)^{-1}$, and hence $F_A(gx)=F_{\rho(g)^{-1}A\rho(g)}(x)$.
\item[(3)] If $F_A$ is $G$-invariant, then the Haar average
\begin{equation}\label{eq:Abar}
\overline A=\int_G\rho(g)^{-1}A\,\rho(g)\,dg
\end{equation}
with respect to the normalized Haar probability measure $dg$ on $G$ is positive definite Hermitian, commutes with $\rho(G)$, and satisfies $F_{\overline A}=F_A$. Conversely, if $A$ commutes with $\rho(G)$, then $F_A$ is $G$-invariant.
\end{enumerate}
\end{lem}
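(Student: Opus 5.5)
The plan is to verify the three assertions by direct linear-algebra computations with the operator $Q_x$ of \eqref{eq:Qx}. No analysis is needed beyond the continuity of the action, which lets us integrate over $G$.

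For (1), fix an orthonormal basis $e_1,\dots,e_N$ of $E$. In this basis $Q_x$ has matrix entries $(Q_x)_{lj}=\langle Q_x e_j,e_l\rangle_E=h_x(e_j(x),e_l(x))$. Given $s_i=\sum_j B_{ij}e_j$, we expand
\[
\sum_i |s_i(x)|_h^2=\sum_i\sum_{j,l}B_{ij}\overline{B_{il}}\,h_x(e_j(x),e_l(x)).
\]
Regrouping the sum over $i$ gives $\sum_{j,l}(B^*B)_{lj}(Q_x)_{jl}$ up to the obvious index bookkeeping, which is $\tr(B^*B\,Q_x)=F_A(x)$. Conversely, any basis $\{s_i\}$ determines an invertible matrix $B$, and $A=B^*B$ is positive definite. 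This $A$ is unique for the given basis, since $B$ is. (The correspondence basis $\mapsto A$ is many-to-one modulo left unitary factors, which is harmless.) Every positive definite $A$ arises this way, for instance with $B=A^{1/2}$. It follows that the two descriptions of $\HH_{k,G}$ coincide: $G$-invariance of $\varphi$ is the same as $G$-invariance of $F_A$.

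For (2), the $G$-linearization and the $G$-invariance of $h$ give $h_{gx}\bigl(s(gx),t(gx)\bigr)=h_x\bigl((\rho(g)^{-1}s)(x),(\rho(g)^{-1}t)(x)\bigr)$. Here we use the convention $(\rho(g)s)(y)=g\cdot s(g^{-1}y)$. We then compute
\[
\langle Q_{gx}s,t\rangle=\langle Q_x\rho(g)^{-1}s,\rho(g)^{-1}t\rangle=\langle \rho(g)Q_x\rho(g)^{-1}s,t\rangle,
\]
using unitarity of $\rho(g)$. Hence $Q_{gx}=\rho(g)Q_x\rho(g)^{-1}$. Cyclicity of the trace then gives $F_A(gx)=\tr(\rho(g)^{-1}A\rho(g)Q_x)$. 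The main care needed here is fixing the sign convention of the action consistently; with the opposite convention one just replaces $g$ by $g^{-1}$, which does not affect (3).

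For (3), each conjugate $\rho(g)^{-1}A\rho(g)$ is positive definite Hermitian, and the integrand is continuous in $g$. The average $\overline A$ is therefore Hermitian. It is positive definite because $\langle\overline A v,v\rangle=\int_G\langle A\rho(g)v,\rho(g)v\rangle\,dg\ge\lambda_{\min}(A)|v|^2$. Invariance of Haar measure under $g\mapsto gh$ gives $\rho(h)^{-1}\overline A\rho(h)=\overline A$, so $\overline A$ commutes with $\rho(G)$. Linearity of $A\mapsto F_A$ (the trace against $Q_x$) lets us interchange the integral with the trace:
\[
F_{\overline A}(x)=\int_G F_{\rho(g)^{-1}A\rho(g)}(x)\,dg=\int_G F_A(gx)\,dg=F_A(x),
\]
where the middle equality is (2) and the last is the $G$-invariance of $F_A$. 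The converse is immediate from (2): if $A$ commutes with $\rho(G)$, then $F_A(gx)=F_A(x)$.
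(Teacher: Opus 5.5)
Your proposal is correct and follows the paper's proof essentially step for step: the Gram-matrix computation for (1), $G$-invariance of $h$ plus unitarity and cyclicity of the trace for (2), and Haar averaging with linearity of $A\mapsto F_A$ for (3). Your additions, $B=A^{1/2}$ to realize every positive definite $A$ and the explicit bound $\langle\overline A v,v\rangle\ge\lambda_{\min}(A)|v|^2$, make explicit what the paper leaves implicit. One small caveat: the ``obvious index bookkeeping'' in (1) actually yields $\sum_{j,l}(B^*B)_{lj}(Q_x)_{lj}=\tr\bigl((B^*B)^{T}Q_x\bigr)$, so the operator representing $\sum_i|s_i|_h^2$ is $\sum_i s_i\langle\cdot,s_i\rangle_E$, the entrywise conjugate of $B^*B$; to get $B^*B$ exactly, take $s_i=B^*e_i$. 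This slip is already present in the paper's statement and proof, and it is harmless, since the operator is still positive definite Hermitian and the identification of the two descriptions of $\HH_{k,G}$ is unaffected.
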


\begin{proof}
\emph{(1).} Fix an orthonormal basis $e_1,\dots,e_N$ of $E$. In this basis $Q_x$ is the Gram matrix of the values $e_j(x)$, in the sense that $\langle Q_x u,v\rangle_E=h_x\bigl(\sum_j u_je_j(x),\sum_j v_je_j(x)\bigr)$ for all coefficients $u_j,v_j$. If $A=B^*B$ and $s_i=\sum_j B_{ij}e_j$, then
\[
\sum_{i=1}^N|s_i(x)|_h^2=\sum_i h_x\Bigl(\sum_j B_{ij}e_j(x),\sum_l B_{il}e_l(x)\Bigr)=\tr(B^*B\,Q_x)=\tr(A\,Q_x)=F_A(x).
\]
Because $B$ is invertible, its rows $s_i$ are linearly independent and form a basis of $E$. Conversely, any basis $s_i=\sum_j B_{ij}e_j$ has $B$ invertible, and its squared-norm sum is $F_A$ with $A=B^*B$ positive definite Hermitian, uniquely determined by the basis. Comparing \eqref{eq:Hspace} with \eqref{eq:HFA} gives the asserted equality of sets.

\emph{(2).} The $G$-invariance of $h$ and the definition of the action on sections give, for $s,t\in E$,
\[
\langle Q_{gx}s,t\rangle_E=h_{gx}\bigl(s(gx),t(gx)\bigr)=h_x\bigl((\rho(g)^{-1}s)(x),(\rho(g)^{-1}t)(x)\bigr)=\langle Q_x\rho(g)^{-1}s,\rho(g)^{-1}t\rangle_E,
\]
which is the covariance identity $Q_{gx}=\rho(g)Q_x\rho(g)^{-1}$. Taking traces,
\[
F_A(gx)=\tr(A\,Q_{gx})=\tr\bigl(A\,\rho(g)Q_x\rho(g)^{-1}\bigr)=\tr\bigl(\rho(g)^{-1}A\rho(g)\,Q_x\bigr)=F_{\rho(g)^{-1}A\rho(g)}(x).
\]

\emph{(3).} The integrand in \eqref{eq:Abar} is positive definite Hermitian for every $g$, so $\overline A$ is positive definite Hermitian. The invariance of Haar measure gives $\rho(g_0)^{-1}\overline A\,\rho(g_0)=\overline A$ for every $g_0\in G$, i.e.\ $\overline A$ commutes with $\rho(G)$. Using the linearity of $A\mapsto F_A$ and part (2),
\[
F_{\overline A}(x)=\int_G F_{\rho(g)^{-1}A\rho(g)}(x)\,dg=\int_G F_A(gx)\,dg=F_A(x),
\]
the last step using the assumed $G$-invariance of $F_A$. Conversely, if $A$ commutes with $\rho(G)$, then $\rho(g)^{-1}A\rho(g)=A$ and part (2) gives $F_A(gx)=F_A(x)$ for all $g,x$.
\end{proof}

The averaging does not replace a potential by an approximation: when $F_A$ is invariant, $\overline A$ produces the very same function $F_A$. We record one further normalization that we use repeatedly. Multiplying $A$ by a positive constant multiplies $F_A$ by that constant and hence adds a constant to $\tfrac1k\log F_A$; therefore it leaves $\varphi-\sup_X\varphi$ unchanged. In particular, in computing the integrals in \eqref{eq:alpha} we may always normalize a commuting frame operator to have $\tr A=1$.

\section{The upper bound}\label{sec:upper}

\begin{prop}\label{prop:upper}
In the setting of Section~\ref{sec:averaging}, $\alpha_{k,G}\le\glct_{k,G}$.
\end{prop}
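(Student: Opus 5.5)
Fix a nonzero $G$-invariant subspace $W\subset E$ and a number $c>k\,\lct|W|$. We will show that $\sup_{\varphi\in\HH_{k,G}}\int_X e^{-c(\varphi-\sup_X\varphi)}\,d\mu=\infty$, so that $\alpha_{k,G}\le c$. Letting $c\downarrow k\lct|W|$ and then taking the infimum over $W$ gives the proposition.

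\emph{The degenerating family.} Since $\rho$ is unitary and $W$ is $G$-invariant, $W^\perp$ is also $G$-invariant, and $P_W$ commutes with $\rho(G)$. For $\epsilon\in(0,1]$ set $A_\epsilon=P_W+\epsilon P_{W^\perp}$. This operator is positive definite Hermitian and commutes with $\rho(G)$. By Lemma~\ref{lem:averaging}(3), $F_{A_\epsilon}$ is $G$-invariant, so by Lemma~\ref{lem:averaging}(1),
\[
\varphi_\epsilon=\tfrac1k\log F_{A_\epsilon}\in\HH_{k,G}.
\]
By linearity, $F_{A_\epsilon}=F_W+\epsilon F_{W^\perp}$, where we set $F_{W^\perp}=0$ if $W=E$. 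Both $F_{A_\epsilon}$ and $F_W$ are bounded above by $F_E$ on $X$. Hence $\sup_X\varphi_\epsilon\le \frac1k\log\max_X F_E$, a constant independent of $\epsilon$. Therefore
\[
\int_X e^{-c(\varphi_\epsilon-\sup\varphi_\epsilon)}\,d\mu\;\ge\; C\int_X (F_W+\epsilon F_{W^\perp})^{-c/k}\,d\mu .
\]

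\emph{Divergence of the lower bound.} As $\epsilon\downarrow0$, the integrand increases monotonically to $F_W^{-c/k}$, which is $+\infty$ on the base locus of $W$. By the monotone convergence theorem, the right-hand side tends to $\int_X F_W^{-c/k}\,d\mu$. Since $c/k>\lct|W|$, the function $F_W^{-c/k}$ is not locally integrable at some point. Here we use that $\lct|W|$ is computed with any basis, in particular an orthonormal one, because $F_W$ is comparable to $\sum_j|w_j|_h^2$ for any basis. Because $X$ is compact and $\mu$ is a smooth positive measure, it follows that $\int_X F_W^{-c/k}\,d\mu=\infty$. Consequently the integrals over the family $\{\varphi_\epsilon\}$ are unbounded, and $c$ is not admissible in \eqref{eq:alpha}.

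\emph{Remaining points.} We need that non-integrability for $a>\lct|W|$ holds for every such $a$, not only above some threshold. This follows because local integrability of $F_W^{-a}$ is monotone in $a$: $F_W$ is bounded above, so $F_W^{-a}\le C\,F_W^{-a'}$ for $a<a'$. The only subtle step is justifying the passage to the limit without any uniform control. This is handled by monotone convergence, since $\epsilon\mapsto (F_W+\epsilon F_{W^\perp})^{-c/k}$ is decreasing in $\epsilon$, together with the uniform upper bound on $\sup_X\varphi_\epsilon$. We expect no real obstacle here: the essential input is that $P_W$ commutes with $\rho(G)$, which is what makes the degenerating potentials $G$-invariant.
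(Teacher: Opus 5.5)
Your construction matches the paper's: the commuting operators $A_\epsilon=P_W+\epsilon P_{W^\perp}$, membership in $\HH_{k,G}$ via Lemma~\ref{lem:averaging}, and monotone convergence to $\int_X F_W^{-c/k}\,d\mu=\infty$. However, the step that passes from the unnormalized integrals to the normalized ones is argued in the wrong direction. Write $M_\epsilon=\max_X F_{A_\epsilon}$. Then
\[
\int_X e^{-c(\varphi_\epsilon-\sup_X\varphi_\epsilon)}\,d\mu=M_\epsilon^{\,c/k}\int_X F_{A_\epsilon}^{-c/k}\,d\mu .
\]
To get the lower bound $\ge C\int_X F_{A_\epsilon}^{-c/k}\,d\mu$, you need $M_\epsilon$ (equivalently $\sup_X\varphi_\epsilon$) bounded \emph{below} by a positive constant. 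Your bound $F_{A_\epsilon}\le F_E$ only gives $M_\epsilon\le\max_X F_E$, which produces an \emph{upper} bound on the normalized integral and is useless here. Your final paragraph relies on the same wrong-direction bound.

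The inference genuinely fails in general. Take the family $\epsilon I$: it commutes with $\rho(G)$, satisfies $F_{\epsilon I}=\epsilon F_E\le F_E$, and $\int_X F_{\epsilon I}^{-c/k}\,d\mu=\epsilon^{-c/k}\int_X F_E^{-c/k}\,d\mu\to\infty$. Yet the normalized integrals $\int_X (F_E/\max_X F_E)^{-c/k}\,d\mu$ are constant in $\epsilon$. What excludes this behaviour is the hypothesis $W\neq0$, which your normalization step never invokes.

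The repair is one line. Since $F_{A_\epsilon}=F_W+\epsilon F_{W^\perp}\ge F_W$ pointwise, you get $M_\epsilon\ge\max_X F_W>0$. The positivity holds because $W\neq0$ contains a section that is not identically zero. Hence
\[
\int_X e^{-c(\varphi_\epsilon-\sup_X\varphi_\epsilon)}\,d\mu\ge\Bigl(\max_X F_W\Bigr)^{c/k}\int_X F_{A_\epsilon}^{-c/k}\,d\mu\longrightarrow\infty .
\]
The paper gets the same lower bound slightly differently, by using uniform convergence $M_\epsilon\to\max_X F_W>0$; your monotonicity observation is, if anything, more direct. With this correction your proof coincides with the paper's.
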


\begin{proof}
If $\inf_W\lct|W|=+\infty$, then $\glct_{k,G}=+\infty$ and there is nothing to prove. Assume $\inf_W\lct|W|<+\infty$, and let $c$ be any real number with $c>\glct_{k,G}$; put $a=c/k$. By the definition \eqref{eq:glct} of the infimum there is a nonzero $\rho(G)$-invariant subspace $W\subset E$ with $a>\lct|W|$. Since $L$ is basepoint-free, $F_E$ is positive and smooth on the compact $X$, so $\lct|E|=+\infty$; hence $W\neq E$.

Because $\rho$ is unitary and $W$ is $\rho(G)$-invariant, the orthogonal complement $W^\perp$ is also $\rho(G)$-invariant: if $u\in W^\perp$ and $w\in W$ then $\langle\rho(g)u,w\rangle_E=\langle u,\rho(g)^{-1}w\rangle_E=0$. For $\varepsilon>0$ set
\begin{equation}\label{eq:Aeps}
A_\varepsilon=P_W+\varepsilon P_{W^\perp},
\end{equation}
which is positive definite Hermitian and commutes with $\rho(G)$ because $W$ and $W^\perp$ are $\rho(G)$-invariant. By Lemma~\ref{lem:averaging}(1)(3) the potential $\varphi_\varepsilon=\tfrac1k\log F_{A_\varepsilon}$ lies in $\HH_{k,G}$, and from \eqref{eq:FA} and \eqref{eq:FW},
\begin{equation}\label{eq:FAeps}
F_{A_\varepsilon}=F_W+\varepsilon F_{W^\perp}.
\end{equation}


Since $a>\lct|W|$, the function $F_W^{-a}$ is not locally integrable on $X$, and because $\mu$ is a smooth positive volume measure this gives $\int_X F_W^{-a}\,d\mu=+\infty$. As $\varepsilon\downarrow0$, by \eqref{eq:FAeps} the functions $F_{A_\varepsilon}$ decrease pointwise to $F_W$, so $F_{A_\varepsilon}^{-a}$ increases pointwise to $F_W^{-a}$, and the monotone convergence theorem yields
\begin{equation}\label{eq:mct}
\int_X F_{A_\varepsilon}^{-a}\,d\mu\longrightarrow+\infty\qquad(\varepsilon\downarrow0).
\end{equation}
Let $M_\varepsilon=\max_X F_{A_\varepsilon}$ and $M_0=\max_X F_W$. The functions $F_{A_\varepsilon}$ converge uniformly to $F_W$ on the compact $X$ because $F_{W^\perp}$ is continuous; since $W\neq0$, the function $F_W$ is positive somewhere and $M_0>0$. Hence $M_\varepsilon\to M_0>0$, so the $M_\varepsilon$ are bounded below by a positive constant for all small $\varepsilon$. For these $\varepsilon$,
\[
\int_X e^{-c(\varphi_\varepsilon-\sup_X\varphi_\varepsilon)}\,d\mu
=\int_X\Bigl(\frac{F_{A_\varepsilon}}{M_\varepsilon}\Bigr)^{-c/k}\,d\mu
=M_\varepsilon^{\,a}\int_X F_{A_\varepsilon}^{-a}\,d\mu\longrightarrow+\infty
\]
by \eqref{eq:mct}. Thus $c$ is not an admissible exponent in \eqref{eq:alpha}. Since every $c>\glct_{k,G}$ fails to be admissible, $\alpha_{k,G}\le\glct_{k,G}$.
\end{proof}

\section{Uniform integrability over invariant subspaces}\label{sec:uniform}

The lower bound requires more than the integrability of each individual $F_W^{-a}$: it requires a single bound uniform over the family of invariant subspaces. The mechanism is the effective semicontinuity of complex singularity exponents of Demailly and Koll\'ar.

Recall that for a plurisubharmonic function $\psi$ on a complex manifold $Y$ and a compact $K\subset Y$, the \emph{complex singularity exponent} is
\[
c_K(\psi)=\sup\{b\ge0:\ e^{-2b\psi}\text{ is integrable on a neighborhood of }K\}.
\]
We use the following theorem.

\begin{thm}[Demailly--Koll\'ar {\cite[Main Theorem~0.2(2)]{DK01}}]\label{thm:DK}
Let $Y$ be a complex manifold and $K\subset Y$ compact. Let $\psi$ be a locally $L^1$ plurisubharmonic function on $Y$, and let $b<c_K(\psi)$. If $\psi_m$ is a sequence of locally $L^1$ plurisubharmonic functions converging to $\psi$ in $L^1$ on compact subsets of $Y$, then $e^{-2b\psi_m}\to e^{-2b\psi}$ in $L^1$ on some neighborhood of $K$.
\end{thm}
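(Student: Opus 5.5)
The plan is to localize, prove a \emph{uniform} integrability bound for the sequence $\psi_m$ by induction on dimension via the Ohsawa--Takegoshi $L^2$ extension theorem, and then upgrade it to $L^1$ convergence by a Vitali-type argument. This is the original Demailly--Koll\'ar strategy.

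\emph{Localization and choice of exponent.} Since $K$ is compact, it suffices to treat a neighborhood of each point $z_0\in K$ and then take a finite subcover. So we may assume $Y$ is a ball in $\mathbb C^n$ centered at $z_0=0$. Since $b<c_K(\psi)$, fix $b'$ with $b<b'<c_K(\psi)$, so that $e^{-2b'\psi}$ is integrable on a small ball $B_r$ around $0$. The core claim is then:
\[
\text{there are } r'\in(0,r) \text{ and } C<\infty \text{ such that } \int_{B_{r'}}e^{-2b'\psi_m}\,d\lambda\le C \text{ for all large } m.
\]

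\emph{The uniform bound (main obstacle).} This is the effective semicontinuity of complex singularity exponents, and I expect it to be the hard step. I would prove it by induction on $n$, using the Ohsawa--Takegoshi $L^2$ extension theorem.

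For $n=1$, I would write $\psi_m$ locally via the Riesz decomposition as a logarithmic potential of its Laplacian mass plus a harmonic part. Integrability of $e^{-2b'\psi}$ near $0$ forces the mass of $\Delta\psi$ near $0$ to stay below $1/b'$, up to normalization. Weak convergence of $\Delta\psi_m\to\Delta\psi$ (which follows from $L^1_{\mathrm{loc}}$ convergence) keeps these masses uniformly below the threshold on a slightly smaller disc. A Jensen/H\"older estimate on the potentials then gives the uniform bound. Upper semicontinuity and $L^1$ convergence also give a uniform upper bound $\sup\psi_m\le C'$ on compacts (Hartogs), which controls the harmonic parts.

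For $n>1$, I would choose a generic hyperplane $H$ through $0$ such that $\psi|_H$ still satisfies $\int_{H\cap B_r}e^{-2b''\psi}<\infty$ for some $b'<b''$, and such that $\psi_m|_H\to\psi|_H$ in $L^1_{\mathrm{loc}}$. Both properties hold for almost every parallel translate, by Fubini, after possibly shrinking. The induction hypothesis then gives uniform bounds on $H$. Ohsawa--Takegoshi extends the constant function $1$ from $H$ to a holomorphic $f_m$ on $B_{r'}$ with $\int_{B_{r'}}|f_m|^2e^{-2b'\psi_m}\le C\int_H e^{-2b'\psi_m}$ and $f_m(0)=1$. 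The mean-value inequality and the uniform bounds make $|f_m|\ge 1/2$ on a fixed smaller ball, which yields the claim. The delicate points are the loss of exponent when passing to slices and the genericity of $H$ for the whole sequence simultaneously. I would handle these with a small exponent margin and a diagonal subsequence argument, and if necessary a further shrinking of the ball.

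\emph{From the bound to $L^1$ convergence.} Put $p=b'/b>1$. The claim says $e^{-2b\psi_m}$ is bounded in $L^p(B_{r'})$, hence uniformly integrable there. Given any subsequence, $L^1_{\mathrm{loc}}$ convergence yields a further subsequence with $\psi_m\to\psi$ almost everywhere, so $e^{-2b\psi_m}\to e^{-2b\psi}$ almost everywhere. Vitali's convergence theorem then gives $e^{-2b\psi_m}\to e^{-2b\psi}$ in $L^1(B_{r'})$ along that subsequence. Since the limit does not depend on the subsequence, the whole sequence converges. Patching finitely many such balls covering $K$ gives the neighborhood of $K$ required in Theorem~\ref{thm:DK}.
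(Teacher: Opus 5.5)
The paper does not prove this statement. It quotes it from Demailly--Koll\'ar as its single external analytic input, so your outline has to stand on its own as a reconstruction of their theorem. Several parts of it are sound:
\begin{itemize}
\item the localization;
\item the passage from a uniform $L^{b'/b}$ bound to $L^1$ convergence, via a.e.\ convergent subsequences and Vitali;
\item the one-variable case. There you need the harmonic parts bounded \emph{below}. This follows from their uniform $L^1$ bound and the mean-value property, not from the Hartogs upper bound, but it is easily repaired.
\end{itemize}

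The genuine gap is the inductive step. You ask for a hyperplane $H$ \emph{through} $0$ with $\int_{H\cap B_r}e^{-2b''\psi}<\infty$ for some $b''>b'$. Such an $H$ need not exist. Ohsawa--Takegoshi itself gives $c_0(\psi|_H)\le c_0(\psi)$, and the loss can be a full unit. For $\psi=\log|z|$ on $\mathbb C^n$ one has $c_0(\psi)=n$, but $c_0(\psi|_H)=n-1$ for every hyperplane $H\ni 0$. So for $b'\in(n-1,n)$, no choice of $H$ and no small exponent margin helps.

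Fubini does provide good parallel slices $H_t=\{z_n=t\}$ for a.e.\ $t$, but then $0\notin H_t$. The extension satisfies $f_m\equiv 1$ on $H_t$, not $f_m(0)=1$. Set $I(t)=\int_{H_t\cap B_r}e^{-2b''\psi}$. Using the induction hypothesis on $H_t$ and the Hartogs upper bound on $\psi_m$, the $L^2$ norm of $f_m$ is only controlled by $I(t)^{1/2}$. Cauchy estimates then give $|f_m(z)-1|\lesssim |z_n-t|\,I(t)^{1/2}$. But $I(t)$ generally blows up as $t\to 0$, like $|t|^{-2(b''-n+1)}$ in the example. Hence ``the mean-value inequality and the uniform bounds'' do not give $|f_m|\ge 1/2$ on a fixed ball around $0$.

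The missing idea is a quantitative choice of slice that balances $|t|$ against $I(t)$. Fix a polydisc $\{|z'|<\rho,\ |z_n|<\rho\}\subset B_r$.
\begin{itemize}
\item Since $\int_{|t|<\delta}I(t)\,d\lambda(t)\to 0$ as $\delta\to 0$, Chebyshev's inequality gives, for $\delta$ small, a positive-measure set of $t$ with $|t|<\delta$ and $I(t)\le \eta\,\delta^{-2}$. Here $\eta$ depends only on the Ohsawa--Takegoshi and Cauchy constants.
\item Choose such a $t$ also in the full-measure set where the slices converge along the given subsequence.
\item Then, for all large $m$ along that subsequence, $|f_m-1|\le 1/2$ and $\int e^{-2b'\psi_m}\lesssim \delta^{-2}$ on the fixed polydisc $\{|z'|<\rho/4,\ |z_n|<\delta\}$.
\end{itemize}
Both this neighborhood and this bound are independent of the subsequence, which is exactly what your Vitali argument needs. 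Without this balancing step, the induction does not close.
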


Let $\Gr(p,E)$ denote the Grassmannian of $p$-dimensional linear subspaces of $E$. We organize the invariant subspaces into a compact parameter space. For each $1\le p\le N$, the set of $\rho(G)$-invariant subspaces $W\in\Gr(p,E)$ is the locus where $\rho(g)P_W\rho(g)^{-1}=P_W$ for all $g\in G$, a closed condition on $P_W$; hence it is a closed, therefore compact, subset of the compact Grassmannian $\Gr(p,E)$. Writing $\lct(W)$ for the exponent \eqref{eq:lctW}, set
\begin{equation}\label{eq:Sset}
S=\bigsqcup_{p=1}^{N}\{W\in\Gr(p,E):\ \rho(g)W=W\ \forall g\in G\},
\end{equation}
a finite disjoint union of compact sets, hence compact.

\begin{prop}[Uniform integrability]\label{prop:uniform}
In the setting of Section~\ref{sec:averaging}, let $S$ be as in \eqref{eq:Sset} and put $\ell_G=\inf_{W\in S}\lct(W)$, so that $\glct_{k,G}=k\,\ell_G$. If $a$ is a real number with $0<a<\ell_G$, then
\begin{equation}\label{eq:uniform}
\sup_{W\in S}\int_X F_W^{-a}\,d\mu<\infty.
\end{equation}
\end{prop}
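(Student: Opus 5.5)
The plan is a compactness argument: use Theorem~\ref{thm:DK} to get local uniform bounds near each point of the compact set $S$, then take a finite subcover. Suppose \eqref{eq:uniform} fails. Then there are $W_m\in S$ with $\int_X F_{W_m}^{-a}\,d\mu\to\infty$. Since $S$ is a finite union of compact sets, we may pass to a subsequence and assume all $W_m$ have a common dimension $p$ and $W_m\to W$ in $\Gr(p,E)$, with $W\in S$ by closedness. Then $P_{W_m}\to P_W$ in $\End(E)$, so $F_{W_m}=\tr(P_{W_m}Q_x)\to F_W$ uniformly on $X$, because $x\mapsto Q_x$ is continuous on the compact $X$. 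The goal is to show that $\int_X F_{W_m}^{-a}\,d\mu\to\int_X F_W^{-a}\,d\mu<\infty$, which contradicts the choice of the $W_m$. Finiteness of the limit holds because $a<\ell_G\le\lct(W)$, so $F_W^{-a}$ is locally integrable on the compact $X$.

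To prove the convergence, I will localize. Cover $X$ by finitely many coordinate balls $U_\beta$, each with a holomorphic frame $e_\beta$ of $L$ defined on a neighborhood of $\overline{U_\beta}$, and choose compact sets $K_\beta\subset U_\beta$ whose union is $X$. On $U_\beta$, write $w_j=f_j e_\beta$ for an orthonormal basis of $W$, and $w^{(m)}_j=f^{(m)}_j e_\beta$ for orthonormal bases of $W_m$. Choose the bases of $W_m$ so that they converge to the basis of $W$; this is possible since $W_m\to W$, for example by applying Gram--Schmidt to the projections $P_{W_m}w_j$. Set
\[
\psi=\tfrac12\log\sum_j|f_j|^2,\qquad \psi_m=\tfrac12\log\sum_j|f^{(m)}_j|^2 .
\]
These are plurisubharmonic, and locally $L^1$ because $W\neq0$. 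Moreover
\[
F_W^{-a}=e^{-2a\psi}\,|e_\beta|_h^{-2a},
\]
where the factor $|e_\beta|_h^{-2a}$ is smooth and bounded on $U_\beta$, and similarly for $F_{W_m}^{-a}$. Since the coefficients of $f^{(m)}_j$ converge to those of $f_j$ in a finite-dimensional space of holomorphic functions, $f^{(m)}_j\to f_j$ locally uniformly. It follows that $\psi_m\to\psi$ in $L^1_{\mathrm{loc}}$. This uses the standard fact that $\log|f|$ depends continuously in $L^1_{\mathrm{loc}}$ on a holomorphic $f\not\equiv0$; a convenient route is that $\psi_m$ is locally uniformly bounded above and converges pointwise off a proper analytic set. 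Local integrability of $F_W^{-a}$ near $K_\beta$ gives $a<c_{K_\beta}(\psi)$. Theorem~\ref{thm:DK} with $b=a$ then gives $e^{-2a\psi_m}\to e^{-2a\psi}$ in $L^1$ on a neighborhood of $K_\beta$. Summing over $\beta$ and absorbing the bounded smooth factors and the smooth density of $\mu$ yields the desired convergence of integrals.

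One point must be checked carefully: $c_K(\psi)$ is defined by integrability on a neighborhood of $K$, whereas $a<\lct(W)$ gives local integrability near every point. Since $K_\beta$ is compact, local integrability at each point yields integrability on a neighborhood, so the hypothesis of Theorem~\ref{thm:DK} is met. I expect the main obstacle to be the $L^1_{\mathrm{loc}}$ convergence $\psi_m\to\psi$ required by Theorem~\ref{thm:DK}. I will handle it with the cited fact about $\log$ of convergent holomorphic tuples, or alternatively through Hartogs-type compactness of plurisubharmonic functions.
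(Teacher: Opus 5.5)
Your argument is correct and is essentially the paper's proof: the same Gram--Schmidt choice of converging orthonormal bases, the same local potentials $\psi=\tfrac12\log\sum_j|f_j|^2$ fed into Theorem~\ref{thm:DK}, and compactness of $S$; you argue by a convergent subsequence where the paper takes a finite subcover of local neighborhoods, which is equivalent. One small fix: integrability of $F_W^{-a}$ near $K_\beta$ only gives $a\le c_{K_\beta}(\psi)$, so to get the strict inequality required by Theorem~\ref{thm:DK}, use integrability of $F_W^{-a'}$ for some $a<a'<\lct(W)$; this yields $c_{K_\beta}(\psi)\ge\lct(W)>a$, as in the paper.
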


\begin{proof}
Fix $W_0\in S$ and put $p=\dim W_0$; we work inside the component $\Gr(p,E)$ containing $W_0$. For $W$ near $W_0$ the orthogonal projection $W_0\to W$ is an isomorphism, so applying Gram--Schmidt to the images of a fixed orthonormal basis $u_1,\dots,u_p$ of $W_0$ produces an orthonormal basis $w_1(W),\dots,w_p(W)$ of $W$ that depends continuously on $W$, with $w_j(W)\to u_j$ in $E$ as $W\to W_0$.

Let $V\subset X$ be a coordinate open set on which $L$ has a holomorphic frame $\tau$, and write $w_j(W)=f_j(W,z)\,\tau$ on $V$. Since $E$ is finite-dimensional and $W\mapsto w_j(W)$ is continuous into $E$, the holomorphic coefficients $f_j(W,\cdot)$ converge uniformly on compact subsets of $V$ to $f_j(W_0,\cdot)$ as $W\to W_0$. Define the local plurisubharmonic function
\begin{equation}\label{eq:psiW}
\psi_W=\frac12\log\sum_{j=1}^p|f_j(W,z)|^2.
\end{equation}
The sections $w_j(W)$ are not all zero, so $\psi_W\not\equiv-\infty$, and the uniform convergence of the coefficients gives $\psi_W\to\psi_{W_0}$ in $L^1_{\mathrm{loc}}(V)$ as $W\to W_0$.

In the frame $\tau$ we have $F_W=|\tau|_h^2\sum_j|f_j(W,z)|^2$, so $F_{W_0}^{-a}=|\tau|_h^{-2a}\,e^{-2a\psi_{W_0}}$ with $|\tau|_h^{-2a}$ smooth, positive, and (together with the density of $\mu$ relative to Lebesgue measure) bounded above and below on any compact $K\subset V$. Hence, for $a<\lct(W_0)$, local integrability of $F_{W_0}^{-a}$ on $X$ from \eqref{eq:lctW} gives integrability of $e^{-2a\psi_{W_0}}$ on a neighborhood of $K$, i.e.\ $a\le c_K(\psi_{W_0})$; in particular $a<\lct(W_0)\le c_K(\psi_{W_0})$, so Theorem~\ref{thm:DK} applies with $b=a$ and $\psi=\psi_{W_0}$. We claim it yields a Grassmannian neighborhood of $W_0$ on which $\int_K F_W^{-a}\,d\mu$ is uniformly bounded. If not, there would be $W_m\to W_0$ in $\Gr(p,E)$ with $\int_K F_{W_m}^{-a}\,d\mu$ unbounded; but then $\psi_{W_m}\to\psi_{W_0}$ in $L^1_{\mathrm{loc}}$, so Theorem~\ref{thm:DK} forces $e^{-2a\psi_{W_m}}\to e^{-2a\psi_{W_0}}$ in $L^1$ on a neighborhood of $K$, hence $\int_K F_{W_m}^{-a}\,d\mu$ converges, a contradiction.

Cover $X$ by finitely many coordinate sets $V_1,\dots,V_r$ with compact $K_i\subset V_i$ whose interiors cover $X$. Applying the previous paragraph to each $K_i$ and intersecting the finitely many Grassmannian neighborhoods produces a neighborhood $\mathcal U_{W_0}$ of $W_0$ and a constant $C_{W_0}$ with $\int_X F_W^{-a}\,d\mu\le C_{W_0}$ for all $W\in\mathcal U_{W_0}$. As $W_0$ ranges over the compact set $S$, the neighborhoods $\mathcal U_{W_0}$ cover $S$; a finite subcover yields finitely many constants whose maximum bounds $\int_X F_W^{-a}\,d\mu$ for every $W\in S$. This is \eqref{eq:uniform}.
\end{proof}

\section{The lower bound}\label{sec:lower}

\begin{prop}\label{prop:lower}
In the setting of Section~\ref{sec:averaging}, $\alpha_{k,G}\ge\glct_{k,G}$.
\end{prop}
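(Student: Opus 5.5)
We show that every $c$ with $0<c<\glct_{k,G}$ is admissible in \eqref{eq:alpha}. If $\glct_{k,G}=+\infty$, take any finite $c>0$ instead; the argument below works for any $a$ with $0<a<\ell_G$. Put $a=c/k$, so that $0<a<\ell_G$. By Proposition~\ref{prop:uniform} there is a constant $C_a$ with $\int_X F_W^{-a}\,d\mu\le C_a$ for every $W\in S$. The goal is to bound $\int_X e^{-c(\varphi-\sup_X\varphi)}\,d\mu$ uniformly over $\varphi\in\HH_{k,G}$ in terms of $C_a$, $N$, and $\max_X F_E$.

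First we normalize the potential. Let $\varphi=\tfrac1k\log F_A\in\HH_{k,G}$. By Lemma~\ref{lem:averaging}(3) we may replace $A$ by $\overline A$, which commutes with $\rho(G)$ and gives the same $F_A$. By the scaling remark at the end of Section~\ref{sec:averaging} we may also assume $\tr A=1$. Then
\[
\int_X e^{-c(\varphi-\sup_X\varphi)}\,d\mu=\Bigl(\max_X F_A\Bigr)^{a}\int_X F_A^{-a}\,d\mu .
\]
For the first factor, $0\le A\le(\tr A)\,\mathrm{Id}=\mathrm{Id}$ and $Q_x\ge0$ give $F_A=\tr(AQ_x)\le\tr Q_x=F_E$. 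Hence $\max_X F_A\le\max_X F_E$, a constant independent of $A$.

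Next comes the key step, a pointwise lower bound on $F_A$ by some $F_W$ with $W\in S$. Let $\lambda_{\max}$ be the largest eigenvalue of $A$ and $W$ its eigenspace. Since $A$ commutes with $\rho(G)$, $W$ is $\rho(G)$-invariant, so $W\in S$. Since $\tr A=1$ and $A$ has $N$ eigenvalues, $\lambda_{\max}\ge 1/N$. Also $A\ge\lambda_{\max}P_W$ as operators, and $Q_x\ge0$, so
\[
F_A(x)=\tr(AQ_x)\ge\lambda_{\max}\tr(P_WQ_x)\ge\tfrac1N F_W(x).
\]
Therefore $\int_X F_A^{-a}\,d\mu\le N^{a}\int_X F_W^{-a}\,d\mu\le N^aC_a$.

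Combining the two factors gives
\[
\int_X e^{-c(\varphi-\sup_X\varphi)}\,d\mu\le\Bigl(N\max_X F_E\Bigr)^{a}C_a
\]
for every $\varphi\in\HH_{k,G}$. So $c$ is admissible, and letting $c\uparrow\glct_{k,G}$ yields $\alpha_{k,G}\ge\glct_{k,G}$.

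The main obstacle is the uniformity over all invariant subspaces, but Proposition~\ref{prop:uniform} has already handled it. What remains is to check carefully that the top eigenspace is invariant and that the operator inequality $A\ge\lambda_{\max}P_W$ gives the trace inequality. This holds because $\tr\bigl((A-\lambda_{\max}P_W)Q_x\bigr)\ge0$ whenever both operators in the product are positive semidefinite.
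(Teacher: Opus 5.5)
Your proposal is correct and follows the paper's proof essentially step for step. Both arguments Haar-average and trace-normalize the frame operator, bound $F_A\le F_E$ from above, and bound $F_A\ge\frac1N F_{W_{\mathrm{top}}}$ from below using the $G$-invariant top eigenspace. Both then conclude with the uniform bound of Proposition~\ref{prop:uniform}; the case $\glct_{k,G}=0$, which the paper treats separately, is vacuous in your formulation.
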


\begin{proof}
Let $S$ and $\ell_G$ be as in Proposition~\ref{prop:uniform}, so $\glct_{k,G}=k\,\ell_G$. If $\ell_G=0$ the inequality is immediate, since $\alpha_{k,G}\ge0$. Assume $\ell_G>0$, let $c$ be real with $0<c<k\,\ell_G$, and put $a=c/k$, so $0<a<\ell_G$. By Proposition~\ref{prop:uniform} there is a finite constant $C_a$ with
\begin{equation}\label{eq:Ca}
\int_X F_W^{-a}\,d\mu\le C_a\qquad\text{for every }W\in S.
\end{equation}

Let $\varphi\in\HH_{k,G}$. By Lemma~\ref{lem:averaging}, $\varphi=\tfrac1k\log F_B$ for a positive definite Hermitian $B$ with $F_B$ $G$-invariant, and the Haar average produces a positive definite Hermitian $A_0$ commuting with $\rho(G)$ and satisfying $F_{A_0}=F_B$. Replacing $A_0$ by $A=A_0/\tr(A_0)$ multiplies $F_{A_0}$ by a positive constant and so leaves $\varphi-\sup_X\varphi$ unchanged. Thus we may assume
\begin{equation}\label{eq:Anorm}
\varphi=\tfrac1k\log F_A,\qquad A\text{ positive definite Hermitian},\qquad A\text{ commutes with }\rho(G),\qquad\tr A=1.
\end{equation}

Let $\lambda_{\max}$ be the largest eigenvalue of $A$ and $W_{\mathrm{top}}=\{v\in E:Av=\lambda_{\max}v\}$ its eigenspace. Since $A$ commutes with $\rho(G)$, for every $g\in G$ and $v\in W_{\mathrm{top}}$ one has $A(\rho(g)v)=\rho(g)Av=\lambda_{\max}\rho(g)v$, so $\rho(g)W_{\mathrm{top}}\subset W_{\mathrm{top}}$; applying this to $g^{-1}$ gives equality, whence $W_{\mathrm{top}}$ is $\rho(G)$-invariant and $W_{\mathrm{top}}\in S$. Because $A$ is positive definite with $N$ positive eigenvalues summing to $1$, we have $\lambda_{\max}\ge1/N$, and therefore, in the Hermitian operator order,
\begin{equation}\label{eq:Adom}
A\ge\frac1N\,P_{W_{\mathrm{top}}}.
\end{equation}
Since $Q_x$ is positive semidefinite, pairing \eqref{eq:Adom} against $Q_x$ in \eqref{eq:FA} gives
\begin{equation}\label{eq:Flower}
F_A(x)=\tr(A\,Q_x)\ge\frac1N\tr(P_{W_{\mathrm{top}}}Q_x)=\frac1N\,F_{W_{\mathrm{top}}}(x),\qquad x\in X.
\end{equation}
On the other hand every eigenvalue of $A$ is at most $1$, so $A\le I_E$, whence $F_A(x)\le F_{I_E}(x)=F_E(x)$ for all $x$. Put $C_0=\max_X F_E$, which is finite because $X$ is compact and $F_E$ is continuous, and let $M_A=\max_X F_A$, so $M_A\le C_0$. Combining with \eqref{eq:Flower},
\begin{equation}\label{eq:pointwise}
e^{-c(\varphi-\sup_X\varphi)}=\Bigl(\frac{F_A}{M_A}\Bigr)^{-a}=M_A^{\,a}\,F_A^{-a}\le C_0^{\,a}N^{a}\,F_{W_{\mathrm{top}}}^{-a}.
\end{equation}
Integrating \eqref{eq:pointwise} and using \eqref{eq:Ca} with $W=W_{\mathrm{top}}\in S$,
\[
\int_X e^{-c(\varphi-\sup_X\varphi)}\,d\mu\le(NC_0)^{a}\int_X F_{W_{\mathrm{top}}}^{-a}\,d\mu\le(NC_0)^{a}\,C_a.
\]
The right-hand side depends on $c$ and the fixed data but not on $\varphi$. Hence
\[
\sup_{\varphi\in\HH_{k,G}}\int_X e^{-c(\varphi-\sup_X\varphi)}\,d\mu<\infty,
\]
so $c$ is admissible in \eqref{eq:alpha}. Every $c$ with $0<c<k\,\ell_G$ is admissible, so $\alpha_{k,G}\ge k\,\ell_G=\glct_{k,G}$.
\end{proof}

\section{Proof of the main theorem}\label{sec:proof}

We first prove the general form of the result, of which Theorem~\ref{thm:main} is the special case $X$ Fano and $L=-kK_X$.



\begin{proof}[Proof of Theorem~\ref{thm:general}]
Proposition~\ref{prop:upper} gives $\alpha_{k,G}\le\glct_{k,G}$, and Proposition~\ref{prop:lower} gives $\alpha_{k,G}\ge\glct_{k,G}$. As these are inequalities between the same two elements of the extended nonnegative reals, $\alpha_{k,G}=\glct_{k,G}$.
\end{proof}

\begin{proof}[Proof of Theorem~\ref{thm:main}]
Let $X$ be a Fano manifold, $G\subset\Aut(X)$ a compact subgroup, and $k$ a positive integer with $-kK_X$ basepoint-free. Then $X$ is compact, and since $G$ is compact we may choose a smooth $G$-invariant Hermitian metric $h$ on $L=-kK_X$, a $G$-invariant Hermitian inner product on $E=H^0(X,-kK_X)$ inducing the unitary representation $\rho$, and a smooth positive volume measure $\mu$. With this data the hypotheses of Theorem~\ref{thm:general} hold, and the definitions \eqref{eq:alpha} and \eqref{eq:glct} are those of \cite[Definition~4.1, Lemma~4.3, and Definition~4.4]{JR25}. Theorem~\ref{thm:general} therefore gives $\alpha_{k,G}=\glct_{k,G}$, which is \eqref{eq:equality}. As $L=-kK_X$ is the $k$-th tensor power of $-K_X$, this is the fixed-level anticanonical equality of Question~\ref{ques:jr}, answered affirmatively for every Fano manifold $X$ and every compact subgroup $G\subset\Aut(X)$.
\end{proof}

\begin{rem}\label{rem:scope}
The identity is genuinely a fixed-level statement: the proof compares $\alpha_{k,G}$ and $\glct_{k,G}$ at one and the same $k$ and does not pass to the limit $k\to\infty$. The only external analytic input is the Demailly--Koll\'ar effective semicontinuity theorem \cite[Main Theorem~0.2(2)]{DK01}, used in Proposition~\ref{prop:uniform}; the remaining ingredients are the operator-averaging parametrization of invariant Bergman potentials and the top-eigenspace domination \eqref{eq:Adom}. None of these uses a toric structure, so the toric hypothesis in \cite[Proposition~1.3]{JR25} is removed.
\end{rem}

\end{document}